\providecommand{\keywords}[1]
{\small	
  \textbf{Keywords:} #1}
\providecommand{\msc}[2]
{\small	
  \textbf{MSC:} #2}
\title{ \textbf{Anticipated BSDEs driven by fractional Brownian motion with time-delayed generator}}
\author{Pei Zhang$^{1,2}$, Nur Anisah Mohamed $^{1}$, Adriana Irawati Nur Ibrahim  $^{1}$}
\begin{document}
\maketitle
\begin{abstract}
    This paper discusses a new type of anticipated backward stochastic differential equation with a time-delayed generator (DABSDEs, for short) driven by fractional Brownian motion, also known as fractional BSDEs, with Hurst parameter $H\in(1/2,1)$, which extends the results of the anticipated backward stochastic differential equation to the case of the drive is fractional Brownian motion instead of a standard Brownian motion  and in which the generator considers not only the present and future times but also the past time. By using the fixed point theorem, we will demonstrate the existence and uniqueness of the solutions to these equations. Moreover, we shall establish a comparison theorem for the solutions.
\end{abstract}
\keywords{anticipated backward stochastic differential equations; fractional Brownian motion; time-delayed; comparison theorem}\\
\msc{}{60H10; 60H20; 60G22}\\

\section{Introduction}

Since Pardoux and Peng \cite{Par90} first proposed a general form of non-linear backward stochastic differential equations (BSDEs) in 1990, the theoretical research of BSDEs has developed rapidly. In our research, we are looking at the case where there exists a pair of adapted processes $(Y_{\cdot}, Z_{\cdot})$ that satisfy the following type of BSDE
\begin{eqnarray*}
Y_t=\xi+\int_{t}^{T}f(s,Y_s,Z_s)\,{\rm d}s-\int_{t}^{T}Z_s\,{\rm d}B_s, \ \ 0\leq t\leq T,
\end{eqnarray*}
where $\xi$ is the terminal value,  $f$ is the generator related to the present time, and $B_s$ is a standard Brownian process. After the already mentioned celebrated work of Pardoux and Peng, the interest in BSDEs has increased, mainly due to the connection of these tools with stochastic control and PDEs, a connection that will be stated clearly soon, for example, various BSDEs models and the uniqueness and existence of the solutions to these models (Bahlali et al. \cite{Ba02}; Abdelhadiet al. \cite{Abd22} Zhang et al. \cite{Zhang22}), the numerical solution of BSDEs (Ma et al. \cite{Ma02}; Gobet et al. \cite{Go05}; Zhao et al. \cite{Zhao14}), the relationship between BSDEs and partial differential equations (PDEs) (Ren and Xia \cite{Ren06}; Pardoux and R{\u{a}}{\c{s}}canu \cite{Par14}), and the numerous applications of BSDEs in various areas including optimal control, finance, biology, and physics (for examples, refer to \cite{Peng99,Li09,Kar97}).

With the further development of the BSDEs theory, an increasing number of models are being studied. Peng and Yang \cite{Peng09} initially discussed a fundamental class of BSDEs in 2009, namely, anticipated BSDEs, where  \begin{eqnarray*}
 \left\{
\begin{array}{ll}
    \displaystyle
    Y_{t}=\xi_{T}+\int_{t}^{T} f(s,Y_{s},Z_{s},Y_{s+\delta(s) },Z_{s+\zeta(s)})\,{\rm d}{s}- \int_{t}^{T}Z_{s}{\rm d}{B_s}, \quad 0\leq t\leq T;\\ 
    \displaystyle Y_{t}=\xi_t, \quad T\leq t\leq T+K;
\\
    \displaystyle Z_{t}=\eta_t, \quad T\leq t\leq T+K.
\end{array} \right.
\end{eqnarray*}
\newpage

The two deterministic $\mathbb{R}^{+}$-valued continuous functions $\delta(s), \zeta(s)$  defined on $[0, T]$ satisfy (i) $t\le t+\delta(t)\le T+K$, $t\le t+\zeta(t)\le T+K$, and (ii) $\int_{t}^{T}f(s+\delta(s))\,{\rm d}{s}\le L\int_{t}^{T+K}f(s){\rm d}{s}$,  $\int_{t}^{T}f(s+\zeta(s))\,{\rm d}{s}\le L\int_{t}^{T+K}f(s){\rm d}{s}$; the authors also demonstrated the existence and uniqueness of the solution to the above equations. Feng \cite{Feng16} investigated the uniqueness and existence of the solution of an anticipated BSDE with a reflecting boundary. Zhang et al.\cite{Zhang23} obtained some results of mean-filed anticipated BSDEs with a time-delayed generator. Wang and Cui \cite{Wang22} also proposed a new type of differential equation called anticipated backward doubly stochastic differential equation; the authors solved certain stochastic control problems by utilizing the duality between anticipated BSDEs and stochastic differential delay equations.

On the other hand, Delong and Imkeller \cite{Del10} addressed BSDEs with time-delayed generators as follows:
\begin{eqnarray*}
  Y_{t}=\xi+\int_{t}^{T}f(s,Y_{s-u(s)},Z_{s-v(s)})\,{\rm d}{s}- \int_{t}^{T}Z_{s}{\rm d}{B_s}, \quad 0\leq t\leq T,
\end{eqnarray*}

where $f$ is a generator that depends on the past value of a solution and $0\le u(s)\le T$, $0\le v(s)\le T$. As a generalization of Delong and Imkeller \cite{Del10} or Peng and Yang \cite{Peng09}, He et al. \cite{He20} investigated a type of delay and anticipated BSDEs.
Under partial information, Zhuang \cite{Zhuang17} studied non-zero and differential games for the anticipated forward-backward stochastic differential delay equation, which can be used to resolve a problem involving the management of time-delayed pension funds with non-linear expectations.

On the other hand,  first introduced by Kolmogorove\cite{Kol40} in 1940, the fractional Brownian motion (fBm, for short) $B_{t}^{H}$ with Hurst parameter $H\in(1/2,1)$ is a centered Gaussian process with good properties such as self-similarity and long-wall correlation, making it reasonable and efficient to use fBm as a random noise term in stochastic models in the fields of communication engineering, finance and economics.
It is therefore important to study the existence uniqueness and stability of solutions of BSDEs driven by fBm.
 
In 2009, Hu and Peng \cite{Hu09} first proposed BSDEs driven by fractional Brownian motion, that is, the fractional BSDE, has the general form of:
\begin{eqnarray*}
  Y_{t}=\xi+\int_{t}^{T}f(s,Y_{s},Z_{s})\,{\rm d}{s}- \int_{t}^{T}Z_{s}{\rm d}{B_{s}^{H}}, \quad 0\leq t\leq T.
\end{eqnarray*}
Henceforth, the amount of study being done on fractional BSDEs is progressively growing; for example, Borkowska \cite{Bor13} studied generalized BSDEs driven by fractional Brownian motion. Douissi et al. \cite{Dou19} showed a new kind of mean-field anticipated BSDE driven by fractional Brownian motion. Besides, Wen and Shi\cite{Wen17} focused on anticipated BSDEs driven by fractional Brownian motion, while Wen \cite{Wen22} discussed fractional BSDEs with delayed generator.

However, under the condition of BSDEs driven by fractional Brownian motion, the case where the generator considers not only the current time and the future time but also the past time has not been studied yet. Therefore, our study will focus on studying the BSDEs of this case to enrich the theory of BSDEs. This study might then encourage researchers to investigate stochastic optimal control problems more realistically.

Based on the motivations discussed above, an essential and meaningful question is, if we construct the anticipated BSDEs with a time-delayed generator driven by fBm with Hurst parameter $H\in(1/2,1)$, how can we prove the existence and uniqueness of its solution? In addition, what about the relative comparison theorem? In this work we are interested in the following anticipated backward stochastic differential equation with a time-delayed generator (DABSDEs for short) driven by fBm
\begin{equation}\label{eq1}
\left\{
\begin{array}{ll}
  \displaystyle -{\rm d}Y_{t}=f\left(t,Y_{t-d_{1}(t)},Z_{t-d_{2}(t)},Y_{t},Z_{t},Y_{t+d_{3}(t)},Z_{t+d_{4}(t)}\right)\,{\rm d}{t}
-Z_{t}{\rm d}{B_{t}^{H}}, \quad 0\leq t\leq T;
\\
 \displaystyle Y_{t}=\xi_t, \quad T\leq t\leq T+K;
\\
 \displaystyle Z_{t}=\eta_t, \quad T\leq t\leq T+K.
\end{array} \right.
  \end{equation}
  
 The rest of the framework for this study is organized as follows. Section 2 introduces some basic information on the new BSDEs model that we are proposing, which is the fractional DABSDEs. In Section 3, by using the fixed point theorem, we demonstrate the existence and uniqueness of the solutions for this type of BSDE. Then, the comparison theorem of the solutions for this kind of model is obtained in Section 4. 

\section{Preliminaries} 
\newtheorem{theorem}{Theorem}[section]
\newtheorem{definition}[theorem]{Definition}
\newtheorem{lemma}[theorem]{Lemma}
\newtheorem{proposition}[theorem]{Proposition}
Let us start with some definitions of the 
the problem at hand, fractional Brownian motion, assumptions, and some basic results of propositions that will be used throughout the paper. The readers are advised to study papers like Decreusefond and Üstünel \cite{Dec99}, Duncan et al. \cite{Dun00} and Hu \cite{Hu05}, etc. for a more in-depth discussion. 
\subsection{Preliminaries on the Fractional Brownian Motion}
Let $B^{H}=\{B_{t
}^{H}, t\ge 0\}$ be a fractional Brownian motion with Hurst parameter $H\in (0,1)$, which defined on a complete probability space$(\Omega,\mathcal{F},P)$ with filtration $\mathcal{F}$ generated by fBm $\{B_{t
}^{H}\}_{t\ge 0}$, its covariance kernel is given by
\begin{eqnarray*}
  R_{H}(s,t)= E_{H}\left[B_{t}^{H}B_{s
}^{H}\right]=\frac{1}{2} \left(t^{2H}+s^{2H}-\left |t-s  \right |^{2H} \right), \ \  s,t\ge 0.
\end{eqnarray*}

When $H=\frac{1}{2}$, it becomes a standard Brownian motion, when $0\le H \le\frac{1}{2}$, $B_{t}^{H}$ displays negative correlation property while it exhibits a positive correlation and long-range dependence properties when  $\frac{1}{2}\le H \le 1 $. Let $H\ge\frac{1}{2}$, and to simplify the presentation, we only discuss the one-dimensional case throughout this study. Next, consider the following definitions as Hu \cite{Hu05} shown in 2005.
At first, we define 
\begin{eqnarray*}
\left \langle \xi,\eta \right \rangle _{t}=\int_{0}^{t} \int_{0}^{t} \phi (u-v)\xi_u\eta_v{\rm d}u{\rm d}v,\ \ and~  \left \| \xi \right \|^{2}_{t}=\left \langle \xi,\xi\right \rangle _{t}, 
\end{eqnarray*}
where $\xi$ and $\eta$ are two continuous functions on $[0,T]$, for $x\in\mathbb{R},~\phi(x)=2H(2H-1)|x|^{2H-2}$. Then$\left \langle \xi,\eta \right \rangle _{t}$ is a Hilbert scalar product. Let $\Theta_t$ be the completion of the continuous functions under this Hilbert norm.

Let $\xi_1, \xi_2,\dots ,\xi_k,\dots $ be continuous functions on $[0,T]$, $f$ is a polynomial of $n$ variables. Donate $\mathcal{P}_T$ is the set of all polynomials of fBm over $[0,T]$ which contains all elements of the form
\begin{eqnarray*}
  F(\omega)=f\left( \int_{0}^{T}\xi_{1}(t){\rm d}B_{t}^{H},\dots, \int_{0}^{T}\xi_{n}(t){\rm d}B_{t}^{H}\right).
\end{eqnarray*}

Define the Malliavin derivative $D_{s}^{H}$ of a polynomial functional $F$ from $L^{2}(\Omega, \mathcal{F}, P) \to (\Omega, \mathcal{F}, \Theta_t)$ as following
\begin{eqnarray*} D_{s}^{H}F=\sum_{k=1}^{n} \frac{\partial f}{\partial x_k} \left( \int_{0}^{T}\xi_{1}(t){\rm d}B_{t}^{H},\dots, \int_{0}^{T}\xi_{n}(t){\rm d}B_{t}^{H}\right)\xi_{k}(s), \ \ 0\le s \le T.
\end{eqnarray*}

For $F\in \mathcal{P}_T$, let $\mathbb{D}_{1,2}^{H}$ be the completion of $\mathcal{P}_T$ with respect to the norm
\begin{eqnarray*}
    \left \| F \right \|_{H,1, 2}:= E\left[ \left(\left \| F \right \|_{T}^2\right)^\frac{1}{2}\right]+E\left[ \left(\left \| D_{s}^{H}F \right \|_{T}\right)^{\frac{1}{2}}\right].
\end{eqnarray*}

Introduce another derivative as well
\begin{equation*}   \mathbb{D}_{t}^{H}F=\int_{0}^{T}\phi(t-s)D_{s}^{H}F{\rm d}s.
\end{equation*}

\begin{proposition}\label{proposition1}
(Hu \cite{Hu05}, Proposition 6.25)
If $F_s: (\Omega,\mathcal{F},P)\to \Theta_{t}$ is a continuous process such that $E\left[\left \|F  \right \|_{T}^{2}+ \int_{0}^{T}\int_{0}^{T}|\mathbb{D}_{s}^{H}F_t|^{2}{\rm d}s{\rm d}t \right]\le \infty $, denoted as $F_s\in \mathbb{L}^{1,2}_{H}$, then the It\^{o} type stochastic integral $\int_{0}^{T}F_{s}{\rm d}B_{s}^{H}$ exists in $L^2(\Omega,\mathcal{F},P)$ and
\begin{equation*}
    E\left[\int_{0}^{T}F_{s}{\rm d}B_{s}^{H} \right]=0, 
 \end{equation*}   
\begin{equation*}
     E\left[\int_{0}^{T}F_{s}{\rm d}B_{s}^{H} \right]^{2}= E\left[ \left \| F \right \| _{T}^{2}+\int_{0}^{T}\int_{0}^{T}\mathbb{D}_{s}^{H}F_t\mathbb{D}_{t}^{H}F_s{\rm d}s{\rm d}t \right].
\end{equation*}
\end{proposition}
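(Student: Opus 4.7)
The natural plan is to follow the divergence operator construction of the fBm Itô integral and establish the identities first on a dense class of simple integrands, then extend by isometry. Concretely, I would begin by introducing the class of elementary processes of the form $F_s=\sum_{j=0}^{n-1} F_j\,\mathbf{1}_{[t_j,t_{j+1})}(s)$ with $F_j\in\mathcal{P}_T$, for which the integral $\int_0^T F_s\,{\rm d}B_s^H$ is defined unambiguously as the Wick sum $\sum_j F_j\diamond(B^H_{t_{j+1}}-B^H_{t_j})$ (equivalently, via the divergence operator dual to $D^H$). The goal is then to prove the two identities on this class and show that $\mathbb{L}^{1,2}_H$ is precisely the completion of these simple processes under the norm suggested by the second identity.

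Once the integral is written in Wick form, the expectation identity $E\bigl[\int_0^T F_s\,{\rm d}B_s^H\bigr]=0$ is immediate, because each Wick product $F_j\diamond(B^H_{t_{j+1}}-B^H_{t_j})$ has mean zero (the divergence of any element of $\mathbb{D}_{1,2}^H$ is centered by the duality $E[\delta(F)G]=E[\langle F,D^H G\rangle_T]$ applied with $G\equiv 1$). For the second moment I would use this same duality twice: expanding $E\bigl[(\int_0^T F_s\,{\rm d}B_s^H)^2\bigr]$ by duality gives $E[\langle F, D^H(\int F\,{\rm d}B^H)\rangle_T]$, and then applying the commutation relation between $D_s^H$ and the divergence operator $\delta$ produces exactly the two terms $E[\|F\|_T^2]$ and $E[\int_0^T\!\int_0^T \mathbb{D}_s^H F_t\,\mathbb{D}_t^H F_s\,{\rm d}s\,{\rm d}t]$, where the kernel $\phi$ hidden in the norm $\|\cdot\|_T$ accounts for the fractional covariance. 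For simple processes this is a finite sum and reduces to the standard Wick product moment calculus.

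With the identity established on simple processes, the right-hand side defines a norm (equivalent to the $\mathbb{L}^{1,2}_H$-norm under the stated hypothesis) that makes the map $F\mapsto \int_0^T F_s\,{\rm d}B_s^H$ an isometry into $L^2(\Omega,\mathcal{F},P)$. The existence assertion then follows by a standard density argument: approximate an arbitrary $F\in\mathbb{L}^{1,2}_H$ by a sequence of simple processes $F^{(n)}$ in the norm on the right-hand side, observe that $\int F^{(n)}_s\,{\rm d}B_s^H$ is Cauchy in $L^2$ by the isometry, and define the integral as the $L^2$-limit. Both identities pass to the limit by continuity of expectation and of the bilinear form $\langle\cdot,\cdot\rangle_T$.

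The main obstacle will be the commutation step that produces the extra double-integral term, because fBm is not a semimartingale and the usual Itô isometry fails. Getting the signs and the symmetry $\mathbb{D}_s^H F_t\,\mathbb{D}_t^H F_s$ right (as opposed to $|\mathbb{D}_s^H F_t|^2$) requires careful bookkeeping of how $\phi(u-v)$ interacts with the two Malliavin derivatives $D^H$ and $\mathbb{D}^H$; once this algebraic identity is verified on monomials $F_j = f(B^H(\xi_1),\ldots,B^H(\xi_n))$ using the multiplication formula for Wick products of Gaussian variables, the rest is a routine density and continuity argument.
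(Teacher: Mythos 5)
The paper offers no proof of this statement to compare against: Proposition~\ref{proposition1} is quoted as a preliminary directly from Hu \cite{Hu05} (Proposition 6.25), so the only benchmark is the construction in that reference and in Duncan, Hu and Pasik-Duncan \cite{Dun00}. Your plan follows that construction faithfully: define the integral as a divergence (Wick) sum over simple $\mathcal{P}_T$-valued step processes, obtain the zero-mean identity from the duality $E[\delta(F)G]=E[\langle F,D^{H}G\rangle_T]$ with $G\equiv 1$, obtain the second moment from the same duality combined with the commutation relation between the Malliavin derivative and the divergence, and then extend by density. That is the right strategy and the right set of lemmas.

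One step as you phrase it would not go through literally. The right-hand side of the second-moment identity is not a norm: the cross term $E\bigl[\int_0^T\!\!\int_0^T \mathbb{D}^{H}_s F_t\,\mathbb{D}^{H}_t F_s\,{\rm d}s\,{\rm d}t\bigr]$ can be negative, so the map $F\mapsto\int_0^T F_s\,{\rm d}B^H_s$ is not an isometry for any norm ``suggested by the second identity,'' and you cannot deduce that $\int F^{(n)}_s\,{\rm d}B^H_s$ is Cauchy from the Cauchy property of $F^{(n)}$ in that quantity. The correct extension is by continuity with respect to the $\mathbb{L}^{1,2}_H$ norm appearing in the hypothesis: Cauchy--Schwarz gives $\bigl|\int_0^T\!\!\int_0^T \mathbb{D}^{H}_s F_t\,\mathbb{D}^{H}_t F_s\,{\rm d}s\,{\rm d}t\bigr|\le\int_0^T\!\!\int_0^T |\mathbb{D}^{H}_s F_t|^{2}\,{\rm d}s\,{\rm d}t$, hence $E\bigl[(\int_0^T F_s\,{\rm d}B^H_s)^2\bigr]\le E\bigl[\|F\|_T^{2}+\int_0^T\!\!\int_0^T|\mathbb{D}^{H}_s F_t|^{2}\,{\rm d}s\,{\rm d}t\bigr]$, which is precisely why the membership condition for $\mathbb{L}^{1,2}_H$ is stated with $|\mathbb{D}^{H}_s F_t|^{2}$ rather than with the cross term. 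With that repair (extend by the continuity bound, then pass both identities to the limit), your outline is the standard and correct one; the remaining unproved ingredient is the commutation relation producing the double-integral term, which you correctly identify as the technical heart but defer to the Wick/chaos calculus carried out in the cited memoir.
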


\begin{proposition}\label{proposition2}
(Hu \cite{Hu05}, Theorem 10.3)
   For $i=1,2$, let $g_{i}(s) \in \mathbb{D}_{1,2}^{H}$, and  $f_{i}(s),g_{i}(s), s\in[0,T],$ be real
valued stochastic processes satisfying $E\left[\int_{0}^{T}(|f_{i}(s)|^{2}+|g_{i}(s)|^{2}){\rm d}s \right]<\infty$. Suppose that $\mathbb{D}_{t}^{H}g_{i}(s)$ are continuously differentiable with respect to $(s,t)\in [0,T]\times[0,T]$ for almost all $\omega\in\Omega$, and $E\left[\int_{0}^{T}\int_{0}^{T}|\mathbb{D}_{t}^{H}g_{i}(s)|^{2}{\rm d}s{\rm d}t \right]<\infty$. Denote
\begin{eqnarray*}
    Y_{i}(t)=\int_{0}^{t}f_{i}(s){\rm d}s+\int_{0}^{t}g_{i}(s){\rm d}B_{s}^{H}, 0\le t\le T.
\end{eqnarray*}
Then
\begin{eqnarray*}
     Y_{1}(t)Y_{2}(t)=\int_{0}^{t}Y_{1}(s)f_{2}(s){\rm d}s+\int_{0}^{t}Y_{1}(s)g_{2}(s){\rm d}B_{s}^{H}+\int_{0}^{t}Y_{2}(s)f_{1}(s){\rm d}s\quad \quad\quad \quad\quad \\
     +\int_{0}^{t}Y_{2}(s)g_{1}(s){\rm d}B_{s}^{H} +\int_{0}^{t}\mathbb{D}_{s}^{H}Y_{1}(s)g_{2}(s){\rm d}s+\int_{0}^{t}\mathbb{D}_{s}^{H}Y_{2}(s)g_{1}(s){\rm d}s.
\end{eqnarray*}
\end{proposition}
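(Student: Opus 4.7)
The plan is to derive this product rule as a consequence of the It\^o formula for fractional Brownian motion processes with $H>1/2$, applied to the bivariate function $F(y_1,y_2)=y_1y_2$. The one-dimensional version, which one can quote from Hu \cite{Hu05}, asserts that for a smooth scalar $F$ and a process $Y(t)=Y(0)+\int_0^t f(s)\,ds+\int_0^t g(s)\,dB_s^H$ satisfying the Malliavin-regularity hypotheses inherited from Proposition \ref{proposition1},
\begin{equation*}
F(Y(t))=F(Y(0))+\int_0^t F'(Y(s))f(s)\,ds+\int_0^t F'(Y(s))g(s)\,dB_s^H+\int_0^t F''(Y(s))g(s)\mathbb{D}_s^HY(s)\,ds.
\end{equation*}

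First I would promote this to a two-dimensional analogue for smooth $F(y_1,y_2)$ applied to $(Y_1(t),Y_2(t))$: each first partial $\partial_iF$ produces a Lebesgue term with $f_i$ and a stochastic integral with $g_i$, while each second partial $\partial_{ij}F$ produces a Malliavin correction term $\int_0^t\partial_{ij}F(Y_1(s),Y_2(s))g_j(s)\mathbb{D}_s^HY_i(s)\,ds$. Specialising to $F(y_1,y_2)=y_1y_2$ gives $\partial_1F=y_2$, $\partial_2F=y_1$, $\partial_{11}F=\partial_{22}F=0$, and $\partial_{12}F=\partial_{21}F=1$; the diagonal second derivatives vanish, while the two off-diagonal second derivatives produce precisely $\int_0^t\mathbb{D}_s^HY_1(s)g_2(s)\,ds$ and $\int_0^t\mathbb{D}_s^HY_2(s)g_1(s)\,ds$. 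The remaining four terms come directly from $\partial_iF$ paired with $dY_i=f_i\,ds+g_i\,dB_s^H$, which is exactly the claimed identity.

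The main obstacle is legitimising the It\^o formula at the required level of regularity, since $F(y_1,y_2)=y_1y_2$ is smooth but has unbounded first derivatives on $\mathbb{R}^2$. I would proceed by first proving the identity for smooth simple processes $f_i,g_i$ through a direct computation using the Wick-product characterisation of the divergence-type integral $\int g\,dB^H$, and then pass to the limit in the norm controlling $E\|g_i\|_T^2+E\int_0^T\!\int_0^T|\mathbb{D}_t^Hg_i(s)|^2\,ds\,dt$, which by hypothesis is finite. A localisation by the stopping times $\tau_n=\inf\{t\colon|Y_1(t)|+|Y_2(t)|\ge n\}\wedge T$ followed by $n\to\infty$ handles the unboundedness of $\partial_iF$. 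The joint continuous differentiability of $\mathbb{D}_t^Hg_i(s)$ in $(s,t)$ is precisely what makes the diagonal evaluation $\mathbb{D}_s^HY_i(s)$ well-defined and the two correction integrals meaningful; without that assumption, the right-hand side terms would have no obvious sense and the limiting argument would collapse, so I expect verifying convergence of those correction integrals under the stated hypotheses to be the most delicate part of the argument.
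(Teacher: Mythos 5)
This proposition is not proved in the paper at all: it is quoted verbatim as Theorem 10.3 of Hu \cite{Hu05}, so there is no in-paper argument to compare yours against. Your outline --- apply a two-dimensional fractional It\^{o} formula to $F(y_1,y_2)=y_1y_2$, read off the four first-order terms from $\partial_1F=y_2$, $\partial_2F=y_1$, and obtain the two Malliavin correction integrals from the off-diagonal second derivatives --- is exactly the standard derivation and is the route taken in Hu's memoir, so the overall strategy is sound.

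One step of your plan is genuinely problematic, though: the localisation by the stopping times $\tau_n=\inf\{t:|Y_1(t)|+|Y_2(t)|\ge n\}\wedge T$. The stochastic integral here is of divergence (Wick--It\^{o}--Skorokhod) type, and such integrals do not localise under stopping the way classical It\^{o} integrals do: multiplying the integrand by a random indicator $\mathbf{1}_{[0,\tau_n]}$ produces an extra Malliavin-derivative correction via $\delta(Fu)=F\,\delta(u)-\langle D^HF,u\rangle_T$, so $\int_0^{\tau_n}g\,{\rm d}B^H$ is not simply the divergence of the truncated integrand, and the stopped identity you would pass to the limit from is not the one you wrote down. Fortunately the localisation is also unnecessary for this particular $F$: since $F(y_1,y_2)=y_1y_2$ and its derivatives have polynomial growth, the stated $L^2$ hypotheses on $f_i$, $g_i$ and $\mathbb{D}_t^Hg_i(s)$ (together with $g_i\in\mathbb{D}_{1,2}^H$, which puts $Y_ig_j$ in the domain of the divergence) are enough to run the approximation by smooth simple processes and close the argument with direct $L^2$ estimates, which is how Hu handles it. If you replace the stopping-time step by that direct polynomial-growth estimate, the rest of your sketch goes through.
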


\subsection{Assumptions}
To simplify the presentation, we only discuss the one-dimensional case in this study, assume $(\Omega,\mathcal{F},P)$ is a complete probability space  with natural filtration $\mathcal{F}_{t}$. Consider the following sets: \\

$L^2 (\mathcal{F}_{t};\mathbb{R}):= \bigg\{\varphi :\Omega \rightarrow \mathbb{R}\big|\  \varphi\mbox{ is }  \mathcal{F}_{t}-\mbox{measurable}, E[|\varphi|^2]< \infty  \bigg\};$\\

$L_\mathcal{F} ^2 (0,T;\mathbb{R}):= \bigg\{\varphi :[0,T]\times\Omega\rightarrow \mathbb{R} \big|\  \varphi\mbox{ is progressively measurable process,} $\\ 
$~~~~~~~~~~~~~~~~~~~~~~~~~~~~~~~~~~~~~~~~ E\left[\int_{0}^{T}|\varphi(t)|^2]\,{\rm d}{t}\right]< \infty  \bigg\}$;\\

$\mathcal{C}^{k,l}\left(\left[0,T \right]\times \mathbb{R} \right):= \bigg\{\varphi(t,x) :[0,T]\times\mathbb{R}\rightarrow \mathbb{R} \big|\  \varphi ~\mbox{is} ~k~ \mbox{times differentiable with respect to} $\\
$~~~~~~~~~~~~~~~~~~~~~~~~~~~~t\in [0,T] \mbox{ and} ~l~ \mbox{times coutinuously differentiable with respect to} ~x\in \mathbb{R}$ \bigg\};\\

$\mathcal{C}_{pol}^{k,l}\left(\left[0,T \right]\times \mathbb{R} \right):= \bigg\{\varphi(t,x) \big|\  \varphi \in \mathcal{C}_{pol}^{k,l}\left(\left[0,T \right]\times \mathbb{R} \right), ~\mbox{and all deriratives of } ~\varphi~ \mbox{are of} $\\
$~~~~~~~~~~~~~~~~~~~~~~~~~~~~~~~~~~~~~~~~~~~~~~~~\mbox{polynomial growth}\bigg\}$;\\

$\mathcal{V}_{[0,T]}:=\bigg\{\varphi(t,x) \big|\  \varphi \in \mathcal{C}_{pol}^{1,3}\left(\left[0,T \right]\times \mathbb{R} \right) ~\mbox{with}~\frac{\partial \varphi }{\partial t} \in  \mathcal{C}_{pol}^{0,1}\left(\left[0,T \right]\times \mathbb{R} \right)\bigg\}$;\\

And then, we let $\tilde{\mathcal{V}}_{[0,T+K]}, \tilde{\mathcal{V}}_{[0,T+K]}^{H}$ be the completions of $\mathcal{V}_{[0,T+K]}$ under the following norm, respectively
\begin{eqnarray*}
    \left \| \varphi (\cdot ) \right \|_{\beta } = \left \{E\Big[\int_{0}^{T+K}e^{\beta t}|\varphi_{t}|^{2} {\rm d} t\Big]\right \}^{2},
\end{eqnarray*}
\begin{eqnarray*}
    \left \| \varphi (\cdot ) \right \|_{\beta } = \left \{E\Big[\int_{0}^{T+K}e^{\beta t}t^{2H-1}|\varphi_{t}|^{2} {\rm d} t\Big]\right \}^{2}.
\end{eqnarray*}
where $\beta \ge 0$ is a constant, from Lemma 7 \cite{Mat15} we have $ \tilde{\mathcal{V}}_{[0,T+K]}^{H}\subseteq  \tilde{\mathcal{V}}_{[0,T+K]} \subseteq  \mathcal{V}_{[0,T+K]} \subseteq  \mathbb{L}_{H,[0,T+K]}^{1,2}$.

In addition, we introduce assumptions about $d_i$. 
Let $d_{i}(\cdot), i=1,2,3,4$ represent four $\mathbb{R}^{+}$-valued continuous functions defined on $[0,T]$, and consider the following assumptions:
\begin{enumerate}
               \item[(D1)] There exists a constant $K\ge 0$, such that for all $t\in [0,T]$, $0\le t-d_{1}(t)\le t$, $~0\le t-d_{2}(t)\le t$, 
               $~t\le t+d_{3}(t)\le T+K$, 
               $~t\le t+d_{4}(t)\le T+K$;
               \item[(D2)] There exists a constant $L\ge 0$, such that for all non-negative and integrable $f(\cdot)$,\\
               $\int_{t}^{T}f(s-d_{1}(s))\,{\rm d}{s}\le L\int_{t}^{T+K}f(s){\rm d}{s}$,
               $~\int_{t}^{T}f(s-d_{2}(s))\,{\rm d}{s}\le L\int_{t}^{T+K}f(s){\rm d}{s}$,\\
               $\int_{t}^{T}f(s+d_{3}(s))\,{\rm d}{s}\le L\int_{t}^{T+K}f(s){\rm d}{s}$,
               $~\int_{t}^{T}f(s+d_{4}(s))\,{\rm d}{s}\le L\int_{t}^{T+K}f(s){\rm d}{s}$.
\end{enumerate}

Next, we present assumptions about the generator $f$. Assume that $f(t,\omega,u,v,y,z,\phi,\psi):[0,T]\times\Omega\times L^2 (\mathcal{F}_{s^{\prime}}, \mathbb{R})\times L^2 (\mathcal{F}_{s}, \mathbb{R})\times  \mathbb{R}^{2} \times L^2 (\mathcal{F}_{r^{\prime}}, \mathbb{R})\times L^2 (\mathcal{F}_{r}, \mathbb{R})\to L^2 (\mathcal{F}_{t}, \mathbb{R})$ is a $\mathcal{C}_{pol}^{0,1}$-continuous function, where $0\le s^{\prime}, s \le t \le  r^{\prime}, r\le T+K, t\in[0,T]$, 
satisfy the following two assumptions:
\begin{enumerate}
               \item[(H1)] There exists a constant $C>0$, such that for every $t\in [0,T]$, we have
               \begin{eqnarray*}
                  &&\left|f\left(t,u,v,y,z,\phi,\psi)-f(t,\bar{u},\bar{v},\bar{y},\bar{z},\bar{\phi},\bar{\psi}\right) \right|\\
                  &&\quad \le C\left (|u-\bar{u}|+t^{H-\frac{1}{2}}|v-\bar{v}|+|y-\bar{y}|+t^{H-\frac{1}{2}}|z-\bar{z}|+E\left[|\phi-\bar{\phi}|+t^{H-\frac{1}{2}}|\psi-\bar{\psi}| \Big | \mathcal{F}_t\right] \right ),
               \end{eqnarray*}
               where $u,\bar{u},v,\bar{v}\in L_\mathcal{F}^2 (0,t;\mathbb{R})$; $y,\bar{y},z,\bar{z}\in \mathbb{R}$; $\phi,\bar{\phi},\psi,\bar{\psi}\in L_\mathcal{F}^2 (t,T+K;\mathbb{R})$;
               \item[(H2)] $E\left[ \int_{0}^{T}\left|f(t,0,0,0,0,0,0) \right|^2\rm{d_t} \right]<\infty$, and $ f(t,0,0,0,0,0,0)\in L^{2}_{\mathcal{F}}(0,T+K;\mathbb{R})$.
\end{enumerate}
\section{An existence and uniqueness result for fractional DABSDEs}
If there exists a pair of processes $(Y_t,Z_t)\in \tilde{\mathcal{V}}_{[0,T+K]}\times \tilde{\mathcal{V}}_{[0,T+K]}^{H}$ satisfying the fractional DABSDEs of model (\ref{eq1}), we call $(Y_t,Z_t)$ is a soluition of Eq.(\ref{eq1}).

\begin{theorem}\label{thm1}
Let the assumptions (H1)(H2) be satisfied, $d_{i}(t),i=1,2,3,4,$ satisfy (D1) and (D2), suppose that $\xi_t \in \tilde{\mathcal{V}}_{[T,T+K]}$ and $\eta_t \in  \tilde{\mathcal{V}}_{[T,T+K]}^{H}$, then the fractional DABSDE (\ref{eq1}) has a unique solution $(Y_t,Z_t)_{t\in [0,T+K]}\in \tilde{\mathcal{V}}_{[0,T+K]}\times \tilde{\mathcal{V}}_{[0,T+K]}^{H}$.
\end{theorem}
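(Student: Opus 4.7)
The plan is to establish existence and uniqueness via a Banach fixed-point argument on the product space $\mathcal{B}_\beta := \tilde{\mathcal{V}}_{[0,T+K]}\times \tilde{\mathcal{V}}_{[0,T+K]}^{H}$ endowed with the product weighted norm $\|(y,z)\|_\beta^2 := \|y\|_\beta^2+\|z\|_\beta^2$ induced by the two norms already introduced in Section~2. The free parameter $\beta>0$ will be chosen large at the very end of the argument to force strict contractivity.

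First I would construct a map $\Phi:\mathcal{B}_\beta\to\mathcal{B}_\beta$ as follows. Given $(y,z)\in\mathcal{B}_\beta$, extend to $[0,T+K]$ by setting $y_t=\xi_t$, $z_t=\eta_t$ for $t\in[T,T+K]$; this is licit because $\xi\in\tilde{\mathcal{V}}_{[T,T+K]}$ and $\eta\in\tilde{\mathcal{V}}_{[T,T+K]}^{H}$. Then freeze the generator arguments by setting
\begin{equation*}
\tilde f(s):= f\bigl(s,y_{s-d_1(s)},z_{s-d_2(s)},y_s,z_s,y_{s+d_3(s)},z_{s+d_4(s)}\bigr),\qquad s\in[0,T].
\end{equation*}
Combining (H1)--(H2) with (D1)--(D2) shows that $\tilde f$ lies in $L^2_\mathcal{F}(0,T;\mathbb{R})$. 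One then invokes the existence result for one-step fractional BSDEs of Hu--Peng \cite{Hu09} to obtain $(Y,Z)$ solving
\begin{equation*}
Y_t=\xi_T+\int_t^T \tilde f(s)\,{\rm d}s-\int_t^T Z_s\,{\rm d}B_s^H,\qquad 0\le t\le T,
\end{equation*}
and sets $(Y,Z)=(\xi,\eta)$ on $[T,T+K]$, defining $\Phi(y,z):=(Y,Z)$. A routine a priori estimate, together with the fact that the embeddings at the end of Section~2 keep everything inside $\mathbb{L}^{1,2}_{H,[0,T+K]}$, guarantees that the image lies in $\mathcal{B}_\beta$.

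The substance of the proof is the contraction estimate. Fix two inputs $(y,z),(y',z')\in\mathcal{B}_\beta$, write $(Y,Z)=\Phi(y,z)$, $(Y',Z')=\Phi(y',z')$, and set $\bar Y:=Y-Y'$, $\bar Z:=Z-Z'$, with bar-notation for the input differences as well. Apply the fractional It\^o formula of Proposition~\ref{proposition2} to $e^{\beta s}|\bar Y_s|^2$ on $[t,T]$ (with $\bar Y_T=0$). This produces the drift $\beta e^{\beta s}|\bar Y_s|^2$, the cross-term $2e^{\beta s}\bar Y_s\bigl(\tilde f(s)-\tilde f'(s)\bigr)$, a $\mathrm{d}B^H$ stochastic integral of zero expectation by Proposition~\ref{proposition1}, and the additional Malliavin contribution $2e^{\beta s}\mathbb{D}_s^H\bar Y_s\,\bar Z_s$ characteristic of the fractional setting. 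Upon taking expectations, the Malliavin term is the device that produces the $t^{2H-1}$-weight on $|\bar Z_s|^2$, which is exactly the weight defining the norm on $\tilde{\mathcal{V}}^H$; this is no accident, since the $s^{H-1/2}$-factors placed on the $v,z,\psi$ arguments of (H1) are precisely tuned so that Young's inequality balances the $Z$-estimate. Using (H1) for the cross-term and then (D2) to convert each of the four integrals of shifted functionals into integrals over $[0,T+K]$, one collects the standard inequality
\begin{equation*}
\|\bar Y\|_\beta^2+\|\bar Z\|_\beta^2\le \frac{C^\ast(C,L,T,H)}{\beta}\bigl(\|y-y'\|_\beta^2+\|z-z'\|_\beta^2\bigr).
\end{equation*}
Choosing $\beta$ large enough that $C^\ast/\beta<1$ makes $\Phi$ a strict contraction on $\mathcal{B}_\beta$ and Banach's theorem supplies the unique fixed point, which is the desired solution of (\ref{eq1}).

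The main obstacle is precisely this contraction estimate. Two features of the fractional delayed-anticipated setting conspire to make it delicate. The first is the cross-term $\mathbb{D}_s^H\bar Y_s\,\bar Z_s$ generated by Proposition~\ref{proposition2}: the $t^{2H-1}$-weight in the $\tilde{\mathcal{V}}^H$-norm must emerge naturally from this term (rather than be forced by hand), which requires the exact form of the weighted Lipschitz condition (H1). The second is the simultaneous presence of delayed arguments $y_{s-d_1(s)}$, $z_{s-d_2(s)}$ and anticipated arguments $y_{s+d_3(s)}$, $z_{s+d_4(s)}$, so (D2) must be applied in both directions with the constant $L$ appearing in all four cross-estimates; careful bookkeeping is needed to ensure the final constant $C^\ast$ is independent of $\beta$. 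Once these estimates are in place, the well-definedness of $\Phi$ and the passage from contractivity to existence and uniqueness are routine.
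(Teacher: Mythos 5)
Your proposal is correct and follows essentially the same route as the paper: a Banach fixed-point argument in the weighted $\beta$-norms, with the contraction estimate obtained by applying the fractional It\^{o} formula of Proposition~\ref{proposition2} to $e^{\beta s}|\bar Y_s|^2$, converting the Malliavin cross-term $\mathbb{D}_s^H\bar Y_s\,\bar Z_s$ into the $s^{2H-1}$-weighted $Z$-term (the paper does this explicitly via the Maticiuc--Nie bound $\tfrac{s^{2H-1}}{M}Z_s\le\mathbb{D}_s^HY_s\le Ms^{2H-1}Z_s$), and then using (H1) with (D2) and a large choice of $\beta$. Your treatment is in fact slightly more explicit than the paper's about why the map is well defined (invoking the one-step fractional BSDE of Hu--Peng), but the substance of the argument is identical.
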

\begin{proof} 

We rewrite the fractional DABSDE given in Eq.(\ref{eq1}) as
\begin{equation}\label{eq3}
\left\{
\begin{array}{ll}
  \displaystyle Y_{t}=\xi_{T}+\int_{t}^{T}f\left(t,Y_{t-d_{1}(t)},Z_{t-d_{2}(t)},Y_{t},Z_{t},Y_{t+d_{3}(t)},Z_{t+d_{4}(t)}\right){\rm d}{s}-\int_{t}^{T}Z_{t}{\rm d}B_{s}^{H}, \quad 0\leq t\leq T;\\
 \displaystyle Y_{t}=\xi_t, \quad T\leq t\leq T+K;\\
 \displaystyle Z_{t}=\eta_t, \quad T\leq t\leq T+K.
\end{array} \right.
  \end{equation}
  
Then we define the mapping  $\Gamma: \tilde{\mathcal{V}}_{[0,T+K]}\times \tilde{\mathcal{V}}_{[0,T+K]}^{H}\to \tilde{\mathcal{V}}_{[0,T+K]}\times \tilde{\mathcal{V}}_{[0,T+K]}^{H}$ such that $(Y_{\cdot},Z_{\cdot})=\Gamma(y_{\cdot},z_{\cdot})$. For two arbitrary elements $(y_{\cdot},z_{\cdot}),(\bar{y}_{\cdot},\bar{z}_{\cdot})\in \tilde{\mathcal{V}}_{[0,T+K]}\times \tilde{\mathcal{V}}_{[0,T+K]}^{H}$, set $(Y_{\cdot},Z_{\cdot})=\Gamma(y_{\cdot},z_{\cdot})$, $(\bar{Y}_{\cdot},\bar{Z}_{\cdot})=\Gamma(\bar{y}_{\cdot},\bar{z}_{\cdot})$, and we put the differences as follows:
\begin{eqnarray*}  
(\hat{Y}_{\cdot},\hat{Z}_{\cdot}):=(Y_{\cdot}-\bar{Y}_{\cdot},Z_{\cdot}-\bar{Z}_{\cdot}),\quad\quad (\hat{y}_{\cdot},\hat{z}_{\cdot}):=(y_{\cdot}-\bar{y}_{\cdot},z_{\cdot}-\bar{z}_{\cdot}).
\end{eqnarray*}

Now we will prove an estimate.\\
Applying  It\^{o}'s formula for $e^{\beta t}|\hat{Y}_t|^2, t\in[0,T]$, and by using Proposition \ref{proposition2} we have
\begin{eqnarray*}
   &&{\rm d}\left(e^{\beta t}\left|\hat{Y}_t\right|^2\right)=\beta e^{\beta t}\left|\hat{Y}_t\right|^2{\rm d}t+2e^{\beta t}\left|\hat{Y}_t\right|{\rm d}\left|\hat{Y}_t\right|+e^{\beta t}{\rm d}\left|\hat{Y}_t\right|^{2} \\
   &&~~~~~~~~~~~~~~~~~~~~~=-2e^{\beta t}\left|\hat{Y}_t\right| \Big|f\left(t,y_{t-d_{1}(t)},z_{t-d_{2}(t)},y_t,z_t,y_{t+d_{3}(t)},z_{t+d_{4}(t)}\right)\\
   &&~~~~~~~~~~~~~~~~~~~~~~~~~~~~~~~~~~~~~~~~~~~-f\left(t,\bar{y}_{t-d_{1}(t)},\bar{z}_{t-d_{2}(t)},\bar{y}_t,\bar{z}_t,\bar{y}_{t+d_{3}(t)},\bar{z}_{t+d_{4}(t)}\right) \Big|{\rm d}t\\
   &&~~~~~~~~~~~~~~~~~~~~~~~~~~+2e^{\beta t}\left|\hat{Y}_t\right|\left|\hat{Z}_t\right|{\rm d}B_{t}^{H}+2e^{\beta t} \mathbb{D}_{t}^{H}\left|\hat{Y}_t\right|\left|\hat{Z}_t\right|{\rm d}t.
\end{eqnarray*}
Take integral on $[t,T]$ and transpose
\begin{eqnarray*}
    e^{\beta t}\left|\hat{Y}_t\right|^2+\beta\int_{t}^{T} e^{\beta s}\left|\hat{Y}_s\right|^2{\rm d}s+2\int_{t}^{T} e^{\beta s}\left|\hat{Y}_s\right|\left|\hat{Z}_s\right|{\rm d}B_{s}^{H}+2\int_{t}^{T} e^{\beta s} \mathbb{D}_{s}^{H}\left|\hat{Y}_s\right|\left|\hat{Z}_s\right|{\rm d}s\\
    = e^{\beta T}\left|\hat{Y}_T\right|^2+2\int_{t}^{T} e^{\beta s}\left|\hat{Y}_s\right| \Big|f\left(s,y_{s-d_{1}(s)},z_{s-d_{2}(s)},y_s,z_s,y_{s+d_{3}(s)},z_{s+d_{4}(s)}\right)~~~~~\\
    -f\left(t,\bar{y}_{t-d_{1}(t)},\bar{z}_{t-d_{2}(t)},\bar{y}_t,\bar{z}_t,\bar{y}_{t+d_{3}(t)},\bar{z}_{t+d_{4}(t)}\right)\Big|{\rm d}s.
\end{eqnarray*}
Moreover, from equation (9) and Proposition 24 \cite{Mat15}, there exists a suitable constant $M>0$, such that for all $t\in[0,T]$ 
\begin{eqnarray*}
   \frac{t^{2H-1}}{M}Z_t \le \mathbb{D}_{t}^{H}Y_t=\frac{\hat{\sigma}_t}{\sigma_t}Z_t \le  Mt^{2H-1}Z_t,
\end{eqnarray*}
we have 
\begin{eqnarray*}
    e^{\beta t}\left|\hat{Y}_t\right|^2+\beta\int_{t}^{T} e^{\beta s}\left|\hat{Y}_s\right|^2{\rm d}s+2\int_{t}^{T} e^{\beta s}\left|\hat{Y}_s\right|\left|\hat{Z}_s\right|{\rm d}B_{s}^{H}+\frac{2}{M} \int_{t}^{T} e^{\beta s}s^{2H-1} \left|\hat{Z}_s\right|^{2}{\rm d}s\\
    \le e^{\beta T}\left|\hat{Y}_T\right|^2+2\int_{t}^{T} e^{\beta s}\left|\hat{Y}_s\right| \Big|f\left(s,y_{s-d_{1}(s)},z_{s-d_{2}(s)},y_s,z_s,y_{s+d_{3}(s)},z_{s+d_{4}(s)}\right)~~~~~\\
    -f\left(t,\bar{y}_{t-d_{1}(t)},\bar{z}_{t-d_{2}(t)},\bar{y}_t,\bar{z}_t,\bar{y}_{t+d_{3}(t)},\bar{z}_{t+d_{4}(t)}\right)\Big|{\rm d}s.
\end{eqnarray*}
Taking expectation on both sides, applying the fact $2AB\le A^2+B^2$ we have
\begin{eqnarray*}
     &&E\left[ e^{\beta t}\left|\hat{Y}_t\right|^2+\beta\int_{t}^{T} e^{\beta s}\left|\hat{Y}_s\right|^2{\rm d}s
     + \frac{2}{M} \int_{t}^{T} e^{\beta s}s^{2H-1} \left|\hat{Z}_s\right|^{2}{\rm d}s \right]\\
     &&\le  E\bigg[e^{\beta T}\left|\hat{Y}_T\right|^2+2\int_{t}^{T} e^{\beta s}\left|\hat{Y}_s\right| \Big|f\left(s,y_{s-d_{1}(s)},z_{s-d_{2}(s)},y_s,z_s,y_{s+d_{3}(s)},z_{s+d_{4}(s)}\right)\\
    &&~~~~~~~~~~~~~~~~~~~~~~~~~~~~~~~~~~~~~~~-f\left(t,\bar{y}_{t-d_{1}(t)},\bar{z}_{t-d_{2}(t)},\bar{y}_t,\bar{z}_t,\bar{y}_{t+d_{3}(t)},\bar{z}_{t+d_{4}(t)}\right)\Big|{\rm d}s \bigg]\\
    &&\le E\bigg[e^{\beta T}\left|\hat{Y}_T\right|^2 
    + \frac{\beta}{2}\int_{t}^{T}e^{\beta s}\left|\hat{Y}_s\right|^2{\rm d}s
    + \frac{2}{\beta}\int_{t}^{T} e^{\beta s}\Big|f\left(s,y_{s-d_{1}(s)},z_{s-d_{2}(s)},y_s,z_s,y_{s+d_{3}(s)},z_{s+d_{4}(s)}\right)\\
    &&~~~~~~~~~~~~~~~~~~~~~~~~~~~~~~~~~~~~~~~~~~~~~~~~~~~~~~~~~~-f\left(t,\bar{y}_{t-d_{1}(t)},\bar{z}_{t-d_{2}(t)},\bar{y}_t,\bar{z}_t,\bar{y}_{t+d_{3}(t)},\bar{z}_{t+d_{4}(t)}\right)\Big|^2{\rm d}s \bigg].
\end{eqnarray*}
Rearranging the terms again
\begin{eqnarray}\label{inequality}
    &&E\left[ e^{\beta t}\left|\hat{Y}_t\right|^2-e^{\beta T}\left|\hat{Y}_T\right|^2
    +\frac{\beta}{2}\int_{t}^{T}e^{\beta s}\left|\hat{Y}_s\right|^2{\rm d}s
    +\frac{2}{M} \int_{t}^{T} e^{\beta s}s^{2H-1} \left|\hat{Z}_s\right|^{2}{\rm d}s \right]\nonumber\\
     &&\le \frac{2}{\beta}E\bigg[\int_{t}^{T} e^{\beta s}\Big|f\left(s,y_{s-d_{1}(s)},z_{s-d_{2}(s)},y_s,z_s,y_{s+d_{3}(s)},z_{s+d_{4}(s)}\right)\nonumber\\
    &&~~~~~~~~~~~~~~~~~~~~~~~~~~-f\left(t,\bar{y}_{t-d_{1}(t)},\bar{z}_{t-d_{2}(t)},\bar{y}_t,\bar{z}_t,\bar{y}_{t+d_{3}(t)},\bar{z}_{t+d_{4}(t)}\right)\Big|^2{\rm d}s \bigg]
\end{eqnarray}
Next, we will estimate the right part of the above equation, considering the assumption (H1) and Jensen’s inequality
\begin{eqnarray*}
     &&E\bigg[\int_{t}^{T} e^{\beta s}\Big|f\left(s,y_{s-d_{1}(s)},z_{s-d_{2}(s)},y_s,z_s,y_{s+d_{3}(s)},z_{s+d_{4}(s)}\right)\\
    &&~~~~~~~~~~~~~~~~~~-f\left(t,\bar{y}_{t-d_{1}(t)},\bar{z}_{t-d_{2}(t)},\bar{y}_t,\bar{z}_t,\bar{y}_{t+d_{3}(t)},\bar{z}_{t+d_{4}(t)}\right)\Big|^2{\rm d}s \bigg]\\
    &&\le C^{2}E\bigg[\int_{t}^{T} e^{\beta s}\bigg(\left|\hat{y}_{s-d_{1}(s)}\right|+(s-d_{2}(s))^{H-\frac{1}{2}}\left|\hat{z}_{s-d_{2}(s)}\right| +\left|\hat{y}_{s}\right|+s^{H-\frac{1}{2}}\left|\hat{z}_{s}\right| \\
    &&~~~~~~~~~~~+ E\left[\left|\hat{y}_{s+d_{3}(s)}\right|+(s+d_{4}(s))^{H-\frac{1}{2}}\left|\hat{z}_{s+d_{4}(s)}\right|\bigg | \mathcal{F}_t\right]\bigg)^{2} {\rm d}s\bigg]\\
    &&\le 12C^{2}E\bigg[\int_{t}^{T} e^{\beta s}\Big(\left|\hat{y}_{s-d_{1}(s)}\right|^2+(s-d_{2}(s))^{2H-1}\left|\hat{z}_{s-d_{2}(s)}\right|^2 +\left|\hat{y}_{s}\right|^2+s^{2H-1}\left|\hat{z}_{s}\right|^2\\
    &&~~~~~~~~~~~~~~~+\left|\hat{y}_{s+d_{3}(s)}\right|^2+(s+d_{4}(s))^{2H-1}\left|\hat{z}_{s+d_{4}(s)}\right|^2\Big) {\rm d}s\bigg].
\end{eqnarray*}
Substitution inequality (\ref{inequality}), let $t=0$, combine assumptions (D1) and (D2) we have
\begin{eqnarray*}
    &&E\left[ \int_{0}^{T}e^{\beta s}\left( \frac{\beta}{2}\left|\hat{Y}_s\right|^2
    +\frac{2}{M}s^{2H-1} \left|\hat{Z}_s\right|^{2}\right){\rm d}s \right]\\
    &&\le \frac{24C^2}{\beta}E\bigg[\int_{0}^{T} e^{\beta s}\Big(\left|\hat{y}_{s-d_{1}(s)}\right|^2+(s-d_{2}(s))^{2H-1}\left|\hat{z}_{s-d_{2}(s)}\right|^2 +\left|\hat{y}_{s}\right|^2+s^{2H-1}\left|\hat{z}_{s}\right|^2\\
    &&~~~~~~~~~~~~~~~~~~+\left|\hat{y}_{s+d_{3}(s)}\right|^2+(s+d_{4}(s))^{2H-1}\left|\hat{z}_{s+d_{4}(s)}\right|^2\Big) {\rm d}s\bigg]\\
    &&\le \frac{24C^{2}(2L+1)}{\beta}E\bigg[\int_{0}^{T+K} e^{\beta s}\left(\left|\hat{y}_{s}\right|^2+s^{2H-1}\left|\hat{z}_{s}\right|^2  \right){\rm d}s\bigg].
\end{eqnarray*}
Multiply $\frac{M}{2}$ on both sides of the above inequality
\begin{eqnarray*}
     &&E\left[ \int_{0}^{T}e^{\beta s} \left(\frac{M\beta}{4}\left|\hat{Y}_s\right|^2
    +s^{2H-1} \left|\hat{Z}_s\right|^{2}\right){\rm d}s \right]\\
     &&\le \frac{12C^{2}(2L+1)}{\beta}E\bigg[\int_{0}^{T+K} e^{\beta s}\left(\left|\hat{y}_{s}\right|^2+s^{2H-1}\left|\hat{z}_{s}\right|^2  \right){\rm d}s\bigg].
\end{eqnarray*}
Leting $\beta=12C^{2}(2L+1)M+\frac{4}{M}$, we have
\begin{eqnarray*}
    &&E\left[ \int_{0}^{T}e^{\beta s} \left(\left|\hat{Y}_s\right|^2
    +s^{2H-1} \left|\hat{Z}_s\right|^{2}\right){\rm d}s \right]\\
     &&\le \frac{1}{2} E\bigg[\int_{0}^{T+K} e^{\beta s}\left(\left|\hat{y}_{s}\right|^2+s^{2H-1}\left|\hat{z}_{s}\right|^2  \right){\rm d}s\bigg].
\end{eqnarray*}

That is, 
\begin{eqnarray*}
    \left \| (\hat{Y}_{\cdot},\hat{Z}_{\cdot}) \right \| _{\beta}
    \le \frac{1}{\sqrt{2}}\left \| (\hat{y}_{\cdot},\hat{z}_{\cdot}) \right \| _{\beta}.
\end{eqnarray*}

Thus, this mapping $\Gamma$ is a strict contraction on $\tilde{\mathcal{V}}_{[0,T+K]}\times \tilde{\mathcal{V}}_{[0,T+K]}^{H}$; by using the fixed point theorem, the mapping $\Gamma$ has a unique fixed point. That means Eq.(\ref{eq3}) has a unique solution $(Y_t,Z_t)\in \tilde{\mathcal{V}}_{[0,T+K]}\times \tilde{\mathcal{V}}_{[0,T+K]}^{H}$ such that $(Y_t,Z_t)=\Gamma(y_t,z_t)$.
\end{proof}

\section{Comparison Theorem}
In this section, we investigate a comparison theorem of fractional DABSDEs of the one-dimensional kind shown below:
\begin{eqnarray*}
\left\{
\begin{array}{ll}
  \displaystyle Y_{t}=\xi_{T}+\int_{t}^{T}f(s,{Y}_{s-d_{1}(s)},Y_{s},Z_{s},{Y}_{s+d_{3}(s)}) \,{\rm d}{s}- \int_{t}^{T}Z_{s}{\rm d}{B_{s}^{H}}, \quad 0\leq t\leq T;\\
 \displaystyle Y_{t}=\xi_{t}, \quad T\leq t\leq T+K;\\
 \displaystyle Z_{t}=\eta_{t}, \quad T\leq t\leq T+K.
\end{array} \right.
  \end{eqnarray*}
  
Firstly, we introduce the classical case of the comparison theorem of fractional BSDEs; Lemma \ref{lemma2.1} refers to Theorem 12.3 of Hu et al. \cite{Hu12}.
\begin{lemma} \label{lemma2.1}
  Define $\eta_t=\eta_0+\int_{0}^{t}b_s{\rm d}s+\int_{0}^{t}\sigma_s{\rm d}B_{t}^{H}$, where $b_s$ and $\sigma_s$ are bounded deterministic functions and $\sigma_s>0$. For $j=1,2$, assume $\xi_{T}^{j}$ are  continuously differentiable and is of
  polynomial growth, $f_{j}\left(t,x,y,z\right)$ and $\frac{\partial }{\partial y} f_{j}\left(t,x,y,z\right)$  are uniformly Lipschitz continuous with respect to $y$ and $z$. Let$(y^{(1)}, z^{(1)}),(y^{(2)}, z^{(2)})$ be the solutions of the following classical type of fractional BSDE: 
  \begin{eqnarray*}
  \left\{
\begin{array}{ll}
  \displaystyle{\rm d}y_{t}^{(j)}= - f_{j}\left(t,\eta_t,y_{t}^{(j)},z_{t}^{(j)}\right){\rm d}t+z_{t}^{(j)}{\rm d}B_{t}^{H},\\
  \displaystyle y_{T}^{(j)}=\xi_{T}^{(j)}.
  \end{array} \right.
  \end{eqnarray*}
  If $\xi_{T}^{(1)}\le \xi_{T}^{(2)}, f_{1}(t,x,y,z)\le f_{2}(t,x,y,z), t\in [0,T]$, then 
 \begin{eqnarray*}
  y_{t}^{(1)}\le y_{t}^{(2)}, ~~~~~a.e., a.s.
  \end{eqnarray*}  
\end{lemma}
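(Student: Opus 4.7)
The plan is to reduce this stochastic comparison to a deterministic parabolic comparison, following the nonlinear Feynman-Kac machinery of Hu-Peng for fractional BSDEs driven by fBm with deterministic forward coefficients. The starting point is that, because $b_s,\sigma_s$ are deterministic and $\sigma_s>0$, $\eta_t$ is a (generalised) Gaussian Markov process, and under the smoothness hypotheses on $\xi_T^{(j)}$ and $f_j$, each solution admits a classical representation
\[
y_t^{(j)} = u_j(t,\eta_t), \qquad z_t^{(j)} = \sigma_t\,(\partial_x u_j)(t,\eta_t),
\]
where $u_j\in\mathcal{C}^{1,2}([0,T]\times\mathbb{R})$ is the classical solution of the quasi-linear backward PDE
\[
\partial_t u_j + b_t\,\partial_x u_j + \sigma_t\hat{\sigma}_t\,\partial_{xx}^{2} u_j + f_j\!\left(t,x,u_j,\sigma_t\,\partial_x u_j\right)=0,\qquad u_j(T,x)=\xi_T^{(j)}(x),
\]
with $\hat{\sigma}_t=\int_{0}^{t}\phi(t-s)\sigma_s\,{\rm d}s$. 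To verify this representation, I would apply the fractional It\^o formula (Proposition \ref{proposition2}) to $u_j(t,\eta_t)$ and match the drift and $B^H$-coefficients against the BSDE dynamics, using $\mathbb{D}_t^H\eta_t=\hat{\sigma}_t$; uniqueness of the BSDE solution (in the sense already employed in the existence theorem) then identifies $(y^{(j)},z^{(j)})$ with the Feynman-Kac pair.

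Given the representation, the comparison reduces to showing $u_1\le u_2$ pointwise. I would set $w:=u_2-u_1$, subtract the two PDEs, and linearise the Lipschitz driver $f_1$ via the mean value theorem, producing
\[
\partial_t w+b_t\,\partial_x w+\sigma_t\hat{\sigma}_t\,\partial_{xx}^{2}w+\alpha_t\,w+\beta_t\,\sigma_t\,\partial_x w + g(t,x)=0,
\]
with terminal datum $w(T,x)=\xi_T^{(2)}(x)-\xi_T^{(1)}(x)\ge 0$, source
\[
g(t,x):=f_2(t,x,u_2,\sigma_t\partial_x u_2)-f_1(t,x,u_2,\sigma_t\partial_x u_2)\ge 0,
\]
and bounded measurable coefficients $\alpha_t,\beta_t$ furnished by the Lipschitz hypotheses on $f_1$ and on $\partial_y f_1$. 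Since $\sigma_s>0$ forces $\sigma_t\hat{\sigma}_t>0$ on $(0,T]$, this is a non-degenerate linear backward parabolic problem with non-negative terminal data and non-negative source; after reversing time, the classical parabolic maximum principle delivers $w\ge 0$, so $u_1\le u_2$ on $[0,T]\times\mathbb{R}$. Substituting $x=\eta_t$ then yields $y_t^{(1)}\le y_t^{(2)}$ a.s., for every $t\in[0,T]$.

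The main obstacle is the first step: establishing enough regularity of $u_j$ so that the It\^o-PDE correspondence is classical rather than merely in a viscosity sense. This is precisely why the lemma assumes the $\xi_T^{(j)}$ to be continuously differentiable of polynomial growth and why both $f_j$ and $\partial_y f_j$ must be uniformly Lipschitz; these are the hypotheses invoked in Hu et al.\ \cite{Hu12} to justify a $\mathcal{C}^{1,2}$ solution of the PDE through an approximation-plus-a-priori-estimate scheme, together with compatible growth/regularity for $\partial_x u_j$ that makes the linearisation coefficients $\alpha_t,\beta_t$ genuinely bounded. Once this smooth Feynman-Kac bridge is in place, the PDE comparison is routine and essentially independent of $H$, which is why the conclusion keeps the same form as in the Brownian case.
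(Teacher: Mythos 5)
The paper offers no proof of this lemma at all: it is imported verbatim from Theorem 12.3 of Hu et al.\ \cite{Hu12}, and your sketch reconstructs essentially the argument used there --- the Feynman--Kac representation $y^{(j)}_t=u_j(t,\eta_t)$, $z^{(j)}_t=\sigma_t\,\partial_x u_j(t,\eta_t)$ with $u_j$ a classical solution of the quasilinear backward PDE whose diffusion coefficient is $\sigma_t\hat{\sigma}_t=\tfrac{1}{2}\tfrac{d}{dt}\left\|\sigma\right\|^{2}_{t}$, followed by linearisation of the driver and the parabolic maximum principle. Your plan is sound and correctly isolates the one genuinely hard step (classical $\mathcal{C}^{1,2}$ solvability of that PDE, which is exactly what the hypotheses on $\xi^{(j)}_T$, $f_j$ and $\partial_y f_j$ are for) as the part that must be borrowed from the same source, so there is nothing to compare against within the present paper itself.
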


Next, let $(Y_{\cdot}^{(1)}, Z_{\cdot}^{(1)}),(Y_{\cdot}^{(2)}, Z_{\cdot}^{(2)})$ be  the solutions of the two one-dimensional fractional DABSDEs shown below, respectively, 
\begin{equation}\label{eq2.1}
\left\{
\begin{array}{ll}
  \displaystyle Y_{t}^{(j)}=\xi_{T}^{(j)}+\int_{t}^{T}f_{j}\left(s,Y^{(j)}_{s-d_{1}(s)},Y^{(j)}_{s},Z^{(j)}_{s},Y^{(j)}_{s+d_{3}(s)}\right) {\rm d}{s}- \int_{t}^{T}Z^{(j)}_{s}{\rm d}{B_{s}^{H}}, \quad 0\leq t\leq T;\\
 \displaystyle Y_{t}^{(j)}=\xi_{t}^{(j)}, \quad T\leq t\leq T+K;\\
 \displaystyle Z_{t}^{(j)}=\eta_{t}^{(j)}, \quad T\leq t\leq T+K,
\end{array} \right.
  \end{equation}
 where $j =1,2$. The end outcome is as follows.
 
\begin{theorem}\label{thm2}
 Suppose $f_{j}(t,\cdot), j =1,2$ satisfy the assumptions (H1) and (H2), $\xi_{t}^{(j)}\in \tilde{\mathcal{V}}_{[T,T+K]}$, and $d_{i}(t), i=1,2,3,4$ satisfy (D1) and (D2). Moreover, assume that
 \begin{enumerate}
     \item [(i)] $f_{2}(t,u,y,z,\phi)$ is increasing with respect to $u$ and $\phi$;
     \item[(ii)] $\xi_{t}^{(1)}\le \xi_{t}^{(2)}$;
     \item[(iii)] $f_{1}\left(t,y_{t-d_{1}(t)},y_{t},z_{t},y_{t+d_{3}(t)}\right)\le f_{2}\left(t,y_{t-d_{1}(t)},y_{t},z_{t},y_{t+d_{3}(t)}\right)$,\\
     $y_{t-d_{1}(t)}\in L_\mathcal{F}^2 (0,t), y_{t+d_{3}(t)}\in L_\mathcal{F}^2 (t,T+K)$.
 \end{enumerate}
Then we have $Y_{t}^{(1)}\le Y_{t}^{(2)}$ almost surely.
\end{theorem}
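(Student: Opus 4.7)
My plan is to run the Picard iteration underlying the contraction argument of Theorem \ref{thm1} on each equation separately, and to compare the iterates inductively via the classical comparison theorem for one-dimensional fractional BSDEs (Lemma \ref{lemma2.1}). Let $\Gamma^{(j)}$ be the map from the proof of Theorem \ref{thm1} associated with equation $j \in \{1,2\}$: given $(y,z)$, it returns the unique $(Y,Z)$ solving the classical fractional BSDE
\[
Y_t = \xi^{(j)}_T + \int_t^T f_j\bigl(s, y_{s-d_1(s)}, Y_s, Z_s, y_{s+d_3(s)}\bigr)\,{\rm d}s - \int_t^T Z_s\,{\rm d}B^H_s
\]
on $[0,T]$, extended by $(\xi^{(j)}_t, \eta^{(j)}_t)$ on $[T, T+K]$. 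Starting from $(Y^{(j,0)}, Z^{(j,0)}) \equiv 0$ on $[0,T]$ together with the prescribed boundary on $[T, T+K]$, I would set $(Y^{(j,n+1)}, Z^{(j,n+1)}) := \Gamma^{(j)}(Y^{(j,n)}, Z^{(j,n)})$. The strict contraction estimate established in Theorem \ref{thm1} guarantees $(Y^{(j,n)}, Z^{(j,n)}) \to (Y^{(j)}, Z^{(j)})$ in $\|\cdot\|_\beta$ as $n \to \infty$.

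\textbf{Inductive comparison.} The key claim is $Y^{(1,n)}_t \le Y^{(2,n)}_t$ a.s.\ for every $n \ge 0$ and every $t \in [0, T+K]$. The base case follows from (ii) on $[T, T+K]$ and the zero initialization on $[0,T]$. Assuming the inequality at level $n$, I use that (D1) places $s - d_1(s)$ and $s + d_3(s)$ in $[0, T+K]$ for each $s \in [0,T]$, so the induction hypothesis applies at those shifted times; combining the monotonicity of $f_2$ in the arguments $u$ and $\phi$ (assumption (i)) with $f_1 \le f_2$ pointwise (assumption (iii)) gives
\[
f_1\bigl(s, Y^{(1,n)}_{s-d_1(s)}, y, z, Y^{(1,n)}_{s+d_3(s)}\bigr) \le f_2\bigl(s, Y^{(2,n)}_{s-d_1(s)}, y, z, Y^{(2,n)}_{s+d_3(s)}\bigr)
\]
for all $(y,z) \in \mathbb{R}^2$. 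With the delay and anticipation thus frozen, $(Y^{(j,n+1)}, Z^{(j,n+1)})$ solves a classical one-dimensional fractional BSDE; the two generators are pointwise ordered and the terminals satisfy $\xi^{(1)}_T \le \xi^{(2)}_T$, so Lemma \ref{lemma2.1} yields $Y^{(1,n+1)}_t \le Y^{(2,n+1)}_t$ on $[0,T]$, while on $[T, T+K]$ the inequality is simply the boundary condition. Finally, convergence in $\|\cdot\|_\beta$ forces $L^2({\rm d}t \otimes {\rm d}P)$ convergence and hence, along a subsequence, almost-everywhere convergence, which lets the inequality pass to the limit and delivers $Y^{(1)}_t \le Y^{(2)}_t$ a.s.

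\textbf{Main obstacle.} The delicate point is the invocation of Lemma \ref{lemma2.1} inside the induction. That lemma is stated for generators driven by a Markovian Gaussian process $\eta_t$ with deterministic coefficients and for polynomial-growth smooth terminals, whereas the frozen generators produced by the Picard scheme carry the arbitrary, generally non-Markovian randomness of the previous iterates $Y^{(j,n)}_{s \pm d_i(s)}$. Making the comparison rigorous will therefore require either a preliminary regularization in which $\xi^{(j)}$ and $f_j$ are approximated by data of the Markovian form covered by Lemma \ref{lemma2.1}, with the comparison then transferred by density in $\tilde{\mathcal{V}}_{[0,T+K]}$, or an extension of the one-dimensional fractional BSDE comparison theorem to Lipschitz generators with arbitrary progressive $\omega$-dependence. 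Once that point is settled, the rest of the argument is routine.
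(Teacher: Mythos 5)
Your proposal is correct in outline and rests on the same two pillars as the paper's own proof: freeze the delayed and anticipated arguments so that each iterate solves a classical one\--dimensional fractional BSDE, compare via Lemma \ref{lemma2.1}, and pass to the limit. The bookkeeping, however, is organized differently. The paper does not iterate both equations from zero; it takes the actual solution $(Y^{(1)}_{\cdot},Z^{(1)}_{\cdot})$ of the first DABSDE, solves the classical BSDE with generator $f_{2}\bigl(s,Y^{(1)}_{s-d_{1}(s)},y,z,Y^{(1)}_{s+d_{3}(s)}\bigr)$ and terminal $\xi^{(2)}$ to obtain $Y^{(3)}$, deduces $Y^{(1)}\le Y^{(3)}$ from (ii), (iii) and Lemma \ref{lemma2.1}, and then builds a monotone sequence $Y^{(3)}\le Y^{(4)}\le\cdots$ by re\--freezing the delay/anticipation slots of $f_{2}$ at the previous iterate, using monotonicity (i) at each step; a Cauchy estimate of the same flavour as in Theorem \ref{thm1} shows this sequence converges to the solution of the second DABSDE, which equals $Y^{(2)}$ by uniqueness. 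Your symmetric double Picard scheme achieves the same conclusion and is arguably cleaner, since you never need the iterates to be monotone, only ordered term by term; the price is that you must verify convergence of two Picard schemes for the partially frozen map $\Gamma^{(j)}$, and note that the contraction proved in Theorem \ref{thm1} is for the map that freezes \emph{all} arguments, so that estimate has to be redone for $\Gamma^{(j)}$ (exactly as the paper redoes it for its own sequence $n\ge 4$). The obstacle you flag at the end is genuine, and it is worth saying plainly that the paper does not resolve it either: Lemma \ref{lemma2.1} is quoted for Markovian data (a process $\eta_t$ with deterministic coefficients and a smooth, polynomial\--growth terminal), yet both your argument and the paper's apply it to frozen generators carrying the general progressive randomness of $Y^{(j,n)}_{s\pm d_i(s)}$ and to terminal values that are merely elements of $\tilde{\mathcal{V}}_{[T,T+K]}$. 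Your suggestion to bridge this by approximation/density or by an extension of the one\--dimensional fractional comparison theorem is the right instinct, but it is precisely the step that would need to be carried out in full; as written it remains the weak point of your proof and of the published one alike.
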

\begin{proof}
Consider the following BSDE
\begin{equation}\label{eq2.2}
\left\{
\begin{array}{ll}
  \displaystyle Y_{t}^{(1)}=\xi_{T}^{(1)}+\int_{t}^{T}f_{1}\left(s,Y^{(1)}_{s-d_{1}(s)},Y^{(1)}_{s},Z^{(1)}_{s},Y^{(1)}_{s+d_{3}(s)}\right)  \,{\rm d}{s}
  - \int_{t}^{T}Z^{(1)}_{s}{\rm d}{B_{s}^{H}}, \quad 0\leq t\leq T;\\
 \displaystyle Y_{t}^{(1)}=\xi_{t}^{(1)}, \quad T\leq t\leq T+K.
\end{array} \right.
  \end{equation}
As we know, $\left(Y_{\cdot}^{(1)},Z_{\cdot}^{(1)}\right)$ is the solution of the one-dimensional fractional DABSDEs given in Eq.(\ref{eq2.1}).
Next, we consider the following BSDE:
\begin{equation}\label{eq2.3}
\left\{
\begin{array}{ll}
  \displaystyle Y_{t}^{(3)}=\xi_{T}^{(2)}+\int_{t}^{T}f_{2}(s,Y^{(1)}_{s-d_{1}(s)},Y^{(3)}_{s},Z^{(3)}_{s},Y^{(1)}_{s+d_{3}(s)})  \,{\rm d}{s}- \int_{t}^{T}Z^{(3)}_{s}{\rm d}{B_{s}^{H}}, \quad 0\leq t\leq T;\\
 \displaystyle Y_{t}^{(3)}=\xi_{t}^{(2)}, \quad T\leq t\leq T+K.
\end{array} \right.
  \end{equation}
  We see the above equation has a unique solution $\left(Y_{t}^{(3)},Z_{t}^{(3)}\right)\in \tilde{\mathcal{V}}_{[0,T+K]}\times \tilde{\mathcal{V}}_{[0,T]}^{H}$.
Since $\xi_{t}^{(1)}\le \xi_{t}^{(2)}$, $f_{1}\left(s,Y^{(1)}_{s-d_{1}(s)},y,z,Y^{(1)}_{s+d_{3}(s)}\right)\le f_{2}\left(s,Y^{(1)}_{s-d_{1}(s)},y,z,Y^{(1)}_{s+d_{3}(s)}\right)$, considering Eq.(\ref{eq2.2}) and Eq.(\ref{eq2.3}), by Lemma \ref{lemma2.1}, we have 
\begin{eqnarray*}
    Y_{t}^{(1)}\le Y_{t}^{(3)}~~a.e., a.s.
\end{eqnarray*}

Set
\begin{equation}\label{eq2.4}
\left\{
\begin{array}{ll}
  \displaystyle Y_{t}^{(4)}=\xi_{T}^{(2)}+\int_{t}^{T}f_{2}\left(s,Y^{(3)}_{s-d_{1}(s)},Y^{(4)}_{s},Z^{(4)}_{s},Y^{(3)}_{s+d_{3}(s)}\right) {\rm d}{s}- \int_{t}^{T}Z^{(4)}_{s}{\rm d}{B_{s}^{H}}, \quad 0\leq t\leq T;\\
 \displaystyle Y_{t}^{(4)}=\xi_{t}^{(2)}, \quad T\leq t\leq T+K.
\end{array} \right.
  \end{equation}
  
Consider Eq.(\ref{eq2.3}) and Eq.(\ref{eq2.4}); $f_{2}\left(t,u,y,z,\phi\right)$ is increasing in $u$ and $\phi$, and $Y_{t}^{(1)}\le Y_{t}^{(3)}$, which imply $f_{2}\left(s,Y^{(1)}_{s-d_{1}(s)},y,z,Y^{(1)}_{s+d_{3}(s)}\right)\le f_{2}\left(s,Y^{(3)}_{s-d_{1}(s)},y,z,Y^{(3)}_{s+d_{3}(s)}\right)$. Similar to the above, we have
\begin{eqnarray*}
     Y_{t}^{(3)}\le Y_{t}^{(4)}~~a.e., a.s.
\end{eqnarray*}

For $n=5,6,\dots $, we consider the following BSDE:
\begin{eqnarray*}
\left\{
\begin{array}{ll}
  \displaystyle Y_{t}^{(n)}=\xi_{T}^{(2)}+\int_{t}^{T}f_{2}\left(s,Y^{(n-1)}_{s-d_{1}(s)},Y^{(n)}_{s},Z^{(n)}_{s},Y^{(n-1)}_{s+d_{3}(s)}\right)  {\rm d}{s}- \int_{t}^{T}Z^{(n)}_{s}{\rm d}{B_s}, \quad 0\leq t\leq T;\\
 \displaystyle Y_{t}^{(n)}=\xi_{t}^{(2)}, \quad T\leq t\leq T+K.
\end{array} \right.
  \end{eqnarray*}
  
Similarly, we get 
\begin{eqnarray*}
     Y_{t}^{(4)}\le Y_{t}^{(5)}\le \dots \le Y_{t}^{(n)}\le \dots ,~~a.s.
\end{eqnarray*}

Next, we will show that for $n\ge 4$, $\left \{ Y_{t}^{(n)}, Z_{t}^{(n)}  \right \} $ is a Cauchy sequences. Denote $\hat{Y}_{t}^{(n)}:=Y_{t}^{(n)}-Y_{t}^{(n-1)}$, $\hat{Z}_{t}^{(n)}:=Z_{t}^{(n)}-Z_{t}^{(n-1)}$, $n\ge 4$, then from estimate (\ref{inequality}), we get
\begin{eqnarray*}
    &&E\left[ e^{\beta t}\left|\hat{Y}_{t}^{(n)}\right|^2-e^{\beta T}\left|\hat{Y}_{T}^{(n)}\right|^2
    +\frac{\beta}{2}\int_{t}^{T}e^{\beta s}\left|\hat{Y}_{s}^{(n)}\right|^2{\rm d}s
    +\frac{2}{M} \int_{t}^{T} e^{\beta s}s^{2H-1} \left|\hat{Z}_{s}^{(n)}\right|^{2}{\rm d}s \right]\nonumber\\
     &&\le \frac{2}{\beta}E\bigg[\int_{t}^{T} e^{\beta s}\Big|f_{2}\left(s,Y_{s-d_{1}(s)}^{(n-1)},Y_{s}^{(n)},Z_{s}^{(n)},Y_{s+d_{3}(s)}^{(n-1)}\right)\nonumber\\
    &&~~~~~~~~~~~~~~~~~~~~~~~~~~-f_{2}\left(s,Y_{s-d_{1}(s)}^{(n-2)},Y_{s}^{(n-1)},Z_{s}^{(n-1)},Y_{s+d_{3}(s)}^{(n-2)}\right)\Big|^{2}{\rm d}s \bigg]
\end{eqnarray*}

When we let $t=0$, apply Jensen's inequality, assumptions (H1), (D1) and (D2), and the fact that $(a+b+c+d)^{2}\le 4(a^{2}+b^{2}+c^{2}+d^{2})$, one has
\begin{eqnarray*}
   && E\left[\int_{0}^{T}e^{\beta s}\left(\frac{\beta}{2}\left |\hat{Y}_{s}^{(n)}\right |^2+\frac{2}{M}s^{2H-1}\left|\hat{Z}_{s}^{(n)}\right|^2 \right){\rm d}s \right] \\
   && \le \frac{2C^{2}}{\beta}E\left[\int_{0}^{T}e^{\beta s} \bigg(\left|\hat{Y}_{s}^{(n-1)}\right|+\left|\hat{Y}_{s}^{(n)}\right|+s^{H-\frac{1}{2}}\left|\hat{Z}_{s}^{(n)}\right|+E\left[\left|\hat{Y}_{t}^{(n-1)}\right| \bigg| \mathcal{F}_t\right]\bigg)^{2}{\rm d}s\right]\\
   && \le \frac{8C^{2}(2L+1)}{\beta}E\left[\int_{0}^{T}e^{\beta s} \Big(\left|\hat{Y}_{s}^{(n-1)}\right|^{2}+\left|\hat{Y}_{s}^{(n)}\right|^{2}+s^{2H-1}\left|\hat{Z}_{s}^{(n)}\right|^{2}+\left|\hat{Y}_{s}^{(n-1)}\right|^{2} \Big){\rm d}s\right].
\end{eqnarray*}
Multiply by $\frac{M}{2}$
\begin{eqnarray*}
     && E\left[\int_{0}^{T}e^{\beta s}\left(\frac{M\beta}{4}\left |\hat{Y}_{s}^{(n)}\right |^2+s^{2H-1}\left|\hat{Z}_{s}^{(n)}\right|^2 \right){\rm d}s \right] \\
    && \le \frac{4C^{2}(2L+1)M}{\beta}E\left[\int_{0}^{T}e^{\beta s} \Big(\left|\hat{Y}_{s}^{(n-1)}\right|^{2}+\left|\hat{Y}_{s}^{(n)}\right|^{2}+s^{2H-1}\left|\hat{Z}_{s}^{(n)}\right|^{2}+\left|\hat{Y}_{s}^{(n-1)}\right|^{2} \Big){\rm d}s\right].
\end{eqnarray*}
Let $\beta=16MC^{2}(2L+1)+\frac{4}{M}, M>2$, then we obtain
\begin{eqnarray*}
    &&E\left[\int_{0}^{T}e^{\beta s}\left(\left|\hat{Y}_{s}^{(n)}\right|^2+s^{2H-1}\left|\hat{Z}_{s}^{(n)}\right|^2 \right){\rm d}s \right]\\ 
    &&\le \frac{1}{4}E\left[ \int_{0}^{T}e^{\beta s} \Big(\left|\hat{Y}_{s}^{(n)}\right|^{2}+s^{2H-1}\left|\hat{Z}_{s}^{(n)}\right|^{2}\Big){\rm d}s\right]  +\frac{1}{2}E\left [ \int_{0}^{T}e^{\beta s}\left|\hat{Y}_{s}^{(n-1)}\right|^{2}{\rm d}s \right]. 
\end{eqnarray*}

Transpose,
\begin{eqnarray*}
   &&E\left[\int_{0}^{T}e^{\beta s}\left(\left|\hat{Y}_{s}^{(n)}\right|^2+s^{2H-1}\left|\hat{Z}_{s}^{(n)}\right|^2 \right){\rm d}s \right]\\ 
   &&\le \frac{2}{3}E\left [ \int_{0}^{T}e^{\beta s}\left|\hat{Y}_{s}^{(n-1)}\right|^{2}{\rm d}s \right]\\
   &&\le \frac{2}{3}E\left [ \int_{0}^{T}e^{\beta s}\left(\left|\hat{Y}_{s}^{(n-1)}\right|^{2}+s^{2H-1}\left|\hat{Z}_{s}^{(n-1)}\right|^2\right){\rm d}s \right].
\end{eqnarray*}

Therefore,
\begin{eqnarray*}
     E\left[\int_{0}^{T}e^{\beta s}\left(\left|\hat{Y}_{s}^{(n)}\right|^2+s^{2H-1}\left|\hat{Z}_{s}^{(n)}\right|^2 \right){\rm d}s \right] 
     \le  \left(\frac{2}{3} \right)^{n-4}E\left[\int_{0}^{T}e^{\beta s}\left(\left|\hat{Y}_{s}^{(4)}\right|^2+s^{2H-1}\left|\hat{Z}_{s}^{(4)}\right|^2 \right){\rm d}s \right].
\end{eqnarray*}

This means $\left(\hat{Y}_{t}^{(n)},\hat{Z}_{t}^{(n)}\right)_{n\ge 4}$ is Cauchy sequence in $\tilde{\mathcal{V}}_{[0,T+K]}\times \tilde{\mathcal{V}}_{[0,T]}^{H}$. Let $(Y_{\cdot},Z_{\cdot})$ be the limit of $\left(\hat{Y}_{t}^{(n)},\hat{Z}_{t}^{(n)}\right)$ for all $0\le t \le T$, when $n\to \infty$
\begin{eqnarray*}
    E\left[ \int_{t}^{T}e^{\beta s}\left|f_{2}\left(s,Y_{s-d_{1}(s)}^{(n-1)},Y_{s}^{(n)},Z_{s}^{(n)},Y_{s+d_{3}(s)}^{(n-1)}\right)-f_{2}\left(s,Y_{s-d_{1}(s)},Y_{s},Z_{s},Y_{s+d_{3}(s)}\right)\right|^2\,{\rm d}{s}\right]\\
    \le 4C^{2} E\left[ \int_{t}^{T}e^{\beta s}\left( \left|Y_{s}^{(n)}-Y_{s}\right|^{2}+s^{2H-1}\left|Z_{s}^{(n)}-Z_{s}\right|^{2}+2L\left|Y_{s}^{(n-1)}-Y_{s}\right|^{2} \right){\rm d}{s}\right] \longrightarrow  0.
\end{eqnarray*}

Consequently, $(Y_{t},Z_{t})$ is a solution of the following fractional DABSDE:
\begin{eqnarray*}
\left\{
\begin{array}{ll}
  \displaystyle Y_{t}=\xi_{T}^{(2)}+\int_{t}^{T}f_{2}(s,Y_{s-d_{1}(s)},Y_{s},Z_{s},Y_{s+d_{3}(s)}) {\rm d}{s}- \int_{t}^{T}Z_{s}{\rm d}{B_{s}^{H}}, \quad 0\leq t\leq T;\\
 \displaystyle Y_{t}=\xi_{t}^{(2)}, \quad T\leq t\leq T+K.
\end{array} \right.
  \end{eqnarray*}
  
Then, by Theorem \ref{thm1} on the uniqueness of the solution, we have
\begin{eqnarray*}
    Y_{t}^{(2)}=Y_{t},\quad  a.s.
\end{eqnarray*}

Since 
\begin{eqnarray*}
    Y_{t}^{(1)}\le Y_{t}^{(3)}\le Y_{t}^{(4)}\le Y_{t},
\end{eqnarray*}
then we obtain the desired result $Y_{t}^{(1)}\le Y_{t}^{(2)}, a.s.$
\end{proof}


\begin{thebibliography}{999}
\bibitem{Par90}
Pardoux, E.; Peng, S. Adapted solution of a backward stochastic differential equation. {\em Systems Control Lett.} {\bf 1990}, {\em 14}, 55--61. 

\bibitem{Ba02}
Bahlali, K; Essaky, E.H.; Oukine, Y. Reflected backward stochastic differential equation with jumps and locally Lipschitz coefficient. {\em Random Oper. Stoch. Equ.} {\bf 2002}, {\em 10}, 481–486. 
\bibitem{Abd22}
Abdelhadi, K.; Eddahbi, M.; Khelfallah, N.; Almualim, A. Backward Stochastic Differential Equations Driven by a Jump Markov Process with Continuous and Non-Necessary Continuous Generators. {\em RFractal Fract. } {\bf 2022}, {\em 6}, 331. 

\bibitem{Zhang22}
Zhang, P.; Ibrahim, A.I.N.; Mohamed, N.A. Backward Stochastic Differential Equations (BSDEs) Using Infinite-Dimensional Martingales with Subdifferential Operator. {\em Axioms.} {\bf 2022}, {\em 11}, 536. 

\bibitem{Ma02}
Ma, J.; Protter, P.; Martín, J.S.; Torres, S. Numberical Method for Backward Stochastic Differential Equations. {\em Ann. Appl. Probab.} {\bf 2002}, {\em 12}, 302-316. 


\bibitem{Go05}
Gobet, E.; Lemor, J.P.; Warin, X. A regression-based Monte Carlo method to solve backward stochastic differential equations. {\em Ann. Appl. Probab.} {\bf 2005}, {\em 15}, 2172–2202. 

\bibitem{Zhao14}
Zhao, W.; Zhang, W.; Ju, L. A Numerical Method and its Error Estimates for the Decoupled Forward-Backward Stochastic Differential Equations.
{\em Commun. Comput. Phys.} {\bf 2014}, {\em 15}, 618–646. 


\bibitem{Ren06}
Ren, Y.; Xia, N. Generalized Reflected BSDE and an Obstacle Problem for PDEs with a Nonlinear Neumann Boundary Condition. {\em Stoch. Anal. Appl.} {\bf 2006}, {\em 24}, 1013–1033. 

\bibitem{Par14}
Pardoux, E.; R{\u{a}}{\c{s}}canu, A. Backward Stochastic Differential Equations. {\em In Stochastic Modelling and Applied Probability; Springer International Publishing} {\bf 2014}, {\em }, 353–515. 


\bibitem{Kar97}
Karoui, N.E.; Peng, S.; Quenez, M.C. Backward stochastic differential equations in finance. {\em Math. Finance} {\bf 1997}, {\em 7}, 1-71.

\bibitem{Peng99}
Peng, S.; Wu, Z. Fully Coupled Forward-Backward Stochastic Differential Equations and Applications to Optimal Control. {\em SIAM J. Control Optim.} {\bf 1999}, {\em 37}, 825–843. 

\bibitem{Li09}
Li, J.; Peng, S. Stochastic optimization theory of backward stochastic differential equations with jumps and viscosity solutions of Hamilton–Jacobi–Bellman equations. {\em Nonlinear Anal.} {\bf 2009}, {\em 70}, 1776–1796. 

\bibitem{Peng09}
Peng, S.; Yang, Z. Anticipated backward stochastic differential equations. {\em Ann. Appl. Probab.} {\bf 2009}, {\em 37},  

\bibitem{Feng16}
Feng, X. Anticipated Backward Stochastic Differential Equation with Reflection. {\em Comm. Statist. Simulation Comput.} {\bf 2016}, {\em 45}, 1676–1688. 

\bibitem{Zhang23}
Zhang P.; Mohamed N. A.; Ibrahim A. I. N. Mean-Field and Anticipated BSDEs with Time-Delayed Generator. {\em Mathematics.} {\bf 2023}, {\em 11}, 888. 

\bibitem{Wang22}
Wang, T.; Cui, S. Anticipated Backward Doubly Stochastic Differential Equations with Non-Lipschitz Coefficients. {\em Mathematics} {\bf 2022}, {\em 10}, 396. 

\bibitem{Del10}
Delong, Ł.; Imkeller, P. Backward stochastic differential equations with time delayed generators—results and counterexamples. {\em Ann. Appl. Probab.} {\bf 2010}, {\em 20},  1512-1536.

\bibitem{He20}
He, P.; Ren, Y.; Zhang, D. A Study on a New Class of Backward Stochastic Differential Equation. {\em Math. Probl.  Eng.} {\bf 2020}, {\em 2020}, 1–9.

\bibitem{Zhuang17}
Zhuang, Y. Non-zero sum differential games of anticipated forward-backward stochastic differential delayed equations under partial information and application. {\em Adv. Difference Equ.} {\bf 2017}, {\em 2017}, 1-21. 
\bibitem{Kol40}
Kolmogorov A. N. Wienersche spiralen und einige andere interessante kurven in hilbertscen raum, cr (doklady). {\em  Acad. Sci. URSS (NS)} {\bf 1940}, {\em 26}, 115-118. 

\bibitem{Hu09}
Hu, Y.; Peng S. Backward stochastic differential equation driven by fractional Brownian motion. {\em SIAM J. Control Optim.} {\bf 2009}, {\em 48}, 1675-1700. 

\bibitem{Bor13}
Borkowska, K. Generalized BSDEs driven by fractional Brownian motion. {\em Statist. Probab. Lett.} {\bf 2013}, {\em 83}, 805–
811. 

\bibitem{Dou19}
Douissi, S.; Wen, J.; Shi, Y. Mean-field anticipated BSDEs driven by fractional Brownian motion and related stochastic control problem. {\em Appl. Math. Comput.} {\bf  2019}, {\em 355}, 282–298. 

\bibitem{Wen17}
Wen, J.; Shi, Y. F. Anticipative backward stochastic differential equations driven by fractional Brownian motion. {\em  Statist. Probab. Lett.} {\bf 2017}, {\em 122}, 118–127. 

\bibitem{Wen22}
Wen, J.Wen J. Fractional backward stochastic differential equations with delayed generator. {\em  arXiv.} {\bf 2022}, {\em 2211}, 16826. 
\bibitem{Dec99}
Decreusefond, L.; Üstünel, A. S. Stochastic analysis of the fractional Brownian motion. {\em Potent. Anal. } {\bf 1999}, {\em 10}, 177–214. 

\bibitem{Dun00}
Duncan, T. E.; Hu, Y.; Pasik-Duncan, B. Integral transformations and anticipative calculus for fractional Brownian motions. {\em SIAM J. Control Optim.} {\bf 2000}, {\em 38}, 582-612. 

\bibitem{Hu05}
Hu, Y. Integral transformations and anticipative calculus for fractional Brownian motions. {\em Mem. Amer. Math. Soc.} {\bf 2005}, {\em 175}, pp. 
\bibitem{Mat15}
Maticiuc, L.; Nie, T. Fractional backward stochastic differential equations and fractional backward variational inequalities.  {\em  J. Theoret. Probab.} {\bf 2015}, {\em 28}, 337–395. 

\bibitem{Hu12}
Hu, Y.; Ocone, D.; Song, J. Some results on backward stochastic differential equations driven by fractional Brownian motions. {\em  Stoch. Anal. Appl. Finance} {\bf 2012}, {\em 13}, 225–242. 


\end{thebibliography}
\end{document}